\newcommand{\be}{\begin{equation}}
\newcommand{\ee}{\end{equation}} 
\newcommand{\bea}{\begin{eqnarray}}
\newcommand{\eea}{\end{eqnarray}}
\newtheorem{lemma}{Lemma} 
\newenvironment{proof}[1][Proof]{\begin{trivlist}
\item[\hskip\labelsep {\bfseries #1}]}{\end{trivlist}}
\def\1#1{^{(#1)}}
\def\la{\langle} 
\def\ra{\rangle}
\begin{document} 
\title{Semigroup's series for negative degrees of the gaps values in numerical 
semigroups generated by two integers\\
and identities for the Hurwitz zeta functions}
\author{Leonid G. Fel${}^{\dag}$ and Takao Komatsu${}^{\ddag}$\\ \\
${}^{\dag}$Department of Civil Engineering, Technion, Haifa 32000, Israel\\
{\em lfel@technion.ac.il}\\
${}^{\ddag}$School of Mathematics and Statistics, Wuhan University, Wuhan 
430072, China\\
{\em komatsu@whu.edu.cn}}
\date{} 
\maketitle
\def\be{\begin{equation}}
\def\ee{\end{equation}} 
\def\bea{\begin{eqnarray}}
\def\eea{\end{eqnarray}}
\def\p{\prime} 
\begin{abstract} 
We derive an explicit expression for an inverse power series over the gaps 
values of numerical semigroups generated by two integers. It implies a 
set of new identities for the Hurwitz zeta function.\\ 
{\bf Keywords:} numerical semigroups, gaps and non-gaps, the Hurwitz zeta 
function \\ 
{\bf 2010 Mathematics Subject Classification:} Primary -- 20M14, Secondary -- 
11P81. 
\end{abstract} 
\section{Introduction}\label{w1}
A sum of integer powers of the gaps values in numerical semigroups $S_m=\la d_1,
\ldots,d_m\ra$, where $\gcd(d_1,\ldots,d_m)=1$, is referred often as semigroup's
series 
\bea
g_n(S_m)=\sum_{s\in{\mathbb N}\setminus S_m}s^n,\qquad n\in{\mathbb Z},
\label{a1}
\eea
and $g_0(S_m)$ is known as a genus of $S_m$. For $n\ge 0$ an explicit expression
 of $g_n(S_2)$ and implicit expression of $g_n(S_3)$ were given in \cite{ro90} 
and \cite{fr07}, respectively. In this paper we derive a formula for semigroup 
series $g_{-n}(S_2)=\sum_{s\in{\mathbb N}\setminus S_m}s^{-n}$ ($n\ge 1$). 

Consider a numerical semigroup $S_2=\la d_1,d_2\ra$, generated by two integers 
$d_1,d_2\ge 2$, $\gcd(d_1,d_2)=1$, with the Hilbert series $H(z;S_2)$ and the 
gaps generating function $\Phi(z;S_2)$ given as follows.  
\bea
H(z;S_2)=\sum_{s\in S_2}z^s,\qquad\Phi(z;S_2)=\sum_{s\in{\mathbb N}\setminus 
S_2}z^s,\qquad H(z;S_2)+\Phi(z;S_2)=\frac1{1-z},\quad z<1,\label{a2}
\eea
where $\min\{{\mathbb N}\setminus S_2\}=1$ and $\max\{{\mathbb N}\setminus S_2\}
=d_1d_2-d_1-d_2$ is called the Frobenius number and is denoted by $F_2$. The 
rational representation (Rep) of $H(z;S_2)$ is given by
\bea
H(z;S_2)=\frac{1-z^{d_1d_2}}{(1-z^{d_1})(1-z^{d_2})}.\label{a3}
\eea
Introduce a new generating function $\Psi_1(z;S_2)$ by 
\bea
\Psi_1(z;S_2)=\int_0^z\frac{\Phi(t;S_2)}{t}dt=\sum_{s\in{\mathbb N}\setminus S_
2}\frac{z^s}{s},\qquad \Psi_1(1;S_2)=\sum_{s\in{\mathbb N}\setminus S_2}\frac1
{s}=g_{-1}(S_2).\label{a4}
\eea
Plugging (\ref{a2}) into (\ref{a4}), we obtain  
\bea
\Psi_1(z;S_2)=\int_0^z\left(\frac1{1-t}-H(t;S_2)\right)\frac{dt}{t}.\label{a5}
\eea
Keeping in mind $(1-t^{d_i})^{-1}=\sum_{k_i=0}^{\infty}t^{k_id_i}$, substitute 
the last into (\ref{a3}) and obtain
\bea
H(t;S_2)=\sum_{k_1,k_2=0}^{\infty}t^{k_1d_1+k_2d_2}-\sum_{k_1,k_2=0}^{\infty}
t^{k_1d_1+k_2d_2+d_1d_2}.\label{a6}
\eea
Indeed, an expression (\ref{a6}) is an infinite series with degrees $s=k_1d_1+
k_2d_2$ running over all nodes in the sublattice ${\mathbb K}$ of the integer 
lattice ${\mathbb Z}_2$ ,
\bea
{\mathbb K}=\{0,0\}\cup{\mathbb K}_1\cup{\mathbb K}_2,\qquad\left\{\begin{array}
{l}{\mathbb K}_1=\{1\le k_1\le d_2-1,\;\;k_2=0\},\\
{\mathbb K}_2=\{0\le k_1\le d_2-1,\;\;1\le k_2\le\infty\}.\end{array}\right.
\label{a7}
\eea
In Figure \ref{fig1}, we present, as an example, a part of the integer lattice
${\mathbb K}$ for the numerical semigroup $\la 5,8\ra$.
\begin{figure}[h!]\begin{center}
\psfig{figure=./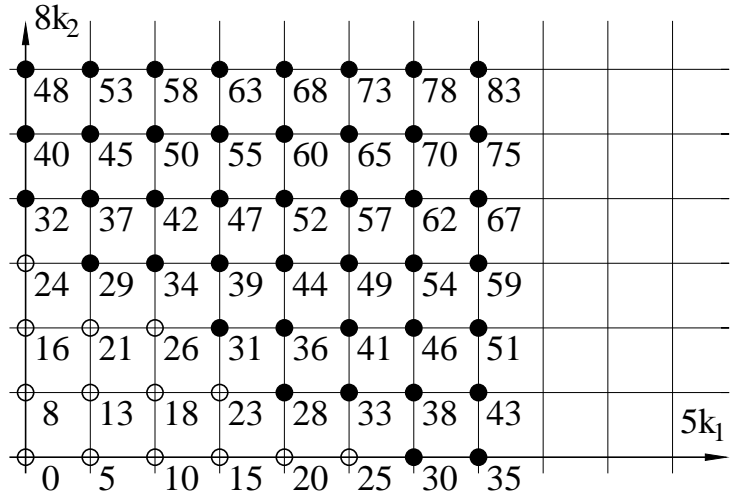,height=6.5cm}
\end{center}
\vspace{-.5cm}
\caption{A part of the integer lattice ${\mathbb K}\subset{\mathbb Z}_2$ for
semigroup $\la 5,8\ra$. The nodes mark the non-gaps of semigroup: the values,  
assigned to the black and white nodes, exceed and precede $F_2=27$, 
respectively.}\label{fig1}
\end{figure}
\begin{lemma}\label{le1}
There exists a bijection between the infinite set of nodes in the integer 
lattice ${\mathbb K}$  and an infinite set of non-gaps of the semigroup $\la 
d_1,d_2\ra$.
\end{lemma}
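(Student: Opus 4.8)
The plan is to produce the bijection explicitly rather than to count nodes abstractly. The natural candidate is the evaluation map $\varphi:\mathbb{K}\to S_2$ sending a node $(k_1,k_2)$ to the degree $k_1d_1+k_2d_2$ that it carries in the expansion (\ref{a6}). Since the non-gaps of $\la d_1,d_2\ra$ are by definition precisely the elements of $S_2$, it suffices to show that $\varphi$ is a bijection of $\mathbb{K}$ onto $S_2$. Well-definedness is immediate: every node of $\mathbb{K}$ has $k_1,k_2\ge 0$, so its degree is a non-negative integer combination of the generators and hence lies in $S_2$.

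For surjectivity I would take an arbitrary $s\in S_2$, write $s=ad_1+bd_2$ with $a,b\ge 0$, and perform Euclidean division $a=qd_2+k_1$ with $0\le k_1\le d_2-1$; then $s=k_1d_1+(b+qd_1)d_2$ exhibits $s$ as the image of the node $(k_1,\,b+qd_1)$. The only thing to verify here is that this node actually belongs to $\mathbb{K}$ as defined in (\ref{a7}), i.e. that it equals $\{0,0\}$, or sits in $\mathbb{K}_1$, or in $\mathbb{K}_2$, according to whether $k_1$ and $b+qd_1$ vanish; this is an immediate case distinction.

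For injectivity, if two nodes $(k_1,k_2),(k_1',k_2')\in\mathbb{K}$ have equal degree then $(k_1-k_1')d_1=(k_2'-k_2)d_2$, so coprimality of $d_1,d_2$ forces $d_2\mid(k_1-k_1')$, and the constraints $0\le k_1,k_1'\le d_2-1$ then give $k_1=k_1'$ and hence $k_2=k_2'$. Finally both sets are visibly infinite: the column $k_1=0$ of $\mathbb{K}_2$ is infinite, and $S_2$ is cofinite in $\mathbb{N}$ since it contains every integer exceeding $F_2$; so the asserted bijection is indeed between two infinite sets.

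I do not expect a genuine obstacle here: the entire content is the reduction modulo $d_2$ in the first coordinate, which is exactly the mechanism behind the rational Rep identity (\ref{a3})--(\ref{a6}) (the subtracted term $-t^{k_1d_1+k_2d_2+d_1d_2}$ in (\ref{a6}) precisely cancels the multiplicities arising from non-reduced representations of non-gaps). The only mild bookkeeping point is matching the reduced pair $(k_1,\,b+qd_1)$ to the correct one of the three pieces comprising $\mathbb{K}$ in (\ref{a7}).
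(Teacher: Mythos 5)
Your proposal is correct and follows essentially the same route as the paper: the surjectivity step is the paper's ``existence'' argument (Euclidean division of the first coordinate by $d_2$), and the injectivity step is the paper's ``uniqueness'' argument (coprimality forcing $d_2\mid(k_1-k_1')$ together with the bound $0\le k_1,k_1'\le d_2-1$). Your additional remarks --- phrasing everything as an explicit evaluation map, checking which piece of ${\mathbb K}$ the reduced node lands in, and noting that both sets are infinite --- are minor bookkeeping refinements of the same argument.
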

\begin{proof}
We have to prove two statements of existence and uniqueness:
\begin{enumerate}
\item Every $s\in\la d_1,d_2\ra$ has its Rep node in ${\mathbb K}$, 
\item All $s\in\la d_1,d_2\ra$ have their Rep nodes in ${\mathbb K}$ only once.
\end{enumerate}
1. Let $s\in\la d_1,d_2\ra$ be given. Then by definition of $\la d_1,d_2\ra$ an 
integer $s$ has a Rep,
\bea
s=k_1d_1+k_2d_2,\quad 0\le k_1,k_2\le\infty.\label{a8}
\eea
Choose $s$ such that $k_1=pd_2+q$, where $p=\left\lfloor k_1/d_2\right\rfloor$, 
i.e., $0\le q\le d_2-1$, and $\left\lfloor r\right\rfloor$ denotes an integer 
part of a real number $r$. Then Rep (\ref{a8}) reads,
\bea
s=qd_1+(k_2+pd_1)d_2,\nonumber
\eea
and $s$ has its Rep node in ${\mathbb K}$.

2. By way of contradiction, assume that there exist two nodes $\{k_1,k_2\}\in
{\mathbb K}$ and $\{l_1,l_2\}\in{\mathbb K}$ such that 
\bea
k_1d_1+k_2d_2=l_1d_1+l_2d_2,\qquad 0\le k_1,l_1\le d_2-1,\quad 0\le k_2,l_2\le
\infty,\quad k_1>l_1,\;\;k_2< l_2,\label{a9}
\eea
namely, there exists such $s\in\la d_1,d_2\ra$ which has two different Rep nodes
in ${\mathbb K}$. Rewrite (\ref{a9}) as 
\bea
(k_1-l_1)d_1=(l_2-k_2)d_2.\label{a10}
\eea
Since $\gcd(d_1,d_2)=1$, the equality (\ref{a10}) implies that 
\bea
k_1-l_1=bd_2,\quad b\ge 1\quad\longrightarrow\quad k_1=l_1+bd_2\quad
\longrightarrow\quad k_1\ge d_2,\label{a11}
\eea
that contradict the assumption $\{k_1,k_2\}\in{\mathbb K}$.$\;\;\;\;\;\;\Box$
\end{proof}
\section{A sum of the inverse gaps values $g_{-1}(S_2)$}\label{w2}
Present an integral in (\ref{a5}) as follows.  
\bea
\Psi_1(z;S_2)=\int_0^z\left(\sum_{k=0}^{\infty}t^{k-1}-\frac{H(t;S_2)}{t}\right)
dt,\qquad\frac{H(t;S_2)}{t}=\sum_{j=0}^2h_j(t;S_2),\label{a12}\\
h_0(t;S_2)=\frac1{t},\qquad h_1(t;S_2)=\sum_{k_1=1}^{d_2-1}t^{k_1d_1-1},\qquad 
h_2(t;S_2)=\sum_{k_1,k_2\in{\mathbb K}_2}t^{k_1d_1+k_2d_2-1}.\label{a13}
\eea
Perform an integration in (\ref{a12}),
\bea
\Psi_1(z;S_2)=\sum_{k=1}^{\infty}\frac{z^k}{k}-\frac1{d_1}\sum_{k_1=1}^{d_2-1}
\frac{z^{k_1d_1}}{k_1}-\sum_{k_1,k_2\in{\mathbb K}_2}\frac{z^{k_1d_1+k_2d_2}}
{k_1d_1+k_2d_2},\label{a14}
\eea
and obtain by (\ref{a4}) and (\ref{a7}), 
\bea
g_{-1}(S_2)=\sum_{k=1}^{\infty}\frac1{k}-\sum_{k_1,k_2\in{\mathbb K}_2}\frac1
{k_1d_1+k_2d_2}-\frac1{d_1}\sum_{k_1=1}^{d_2-1}\frac1{k_1}.\label{a15}
\eea
By Lemma \ref{le1}, after subtraction in (\ref{a15}) there are left a finite 
number of terms, since all terms, which exceed $F_2$ in the two first infinite 
series in (\ref{a15}), are cancelled. To emphasize that fact, we write as 
\bea
g_{-1}(S_2)=\sum_{k=1}^{c_2}\frac1{k}-\sum_{k_1,k_2\in{\mathbb K}_2}^{k_1d_1+
k_2d_2\le c_2}\frac1{k_1d_1+k_2d_2}-\frac1{d_1}\sum_{k_1=1}^{d_2-1}\frac1{k_1},
\label{a16}
\eea 
where $c_2=F_2+1$ is called a conductor of semigroup. 
\section{A sum of the negative degrees of gaps values $g_{-n}(S_2)$}\label{w3}
Generalize formula (\ref{a14}) and introduce a new generating function 
$\Psi_n(z;S_2)$ ($n\ge 2$) by
\bea
\Psi_n(z;S_2)=\int_0^z\frac{dt_1}{t_1}\int_0^{t_1}\frac{dt_2}{t_2}\ldots\int_0
^{t_{n-1}}\frac{dt_n}{t_n}\Phi(t_n;S_2)=\sum_{s\in{\mathbb N}\setminus S_2}
\frac{z^s}{s^n},\quad\Psi_n(1;S_2)=g_{-n}(S_2),\quad\label{a17}
\eea
which satisfies the following recursive relation.  
\bea
\Psi_{k+1}(t_{n-k-1};S_2)=\int_0^{t_{n-k-1}}\frac{dt_{n-k}}{t_{n-k}}\Psi_{k}
(t_{n-k};S_2),\;\;k\ge 0,\quad\Psi_0(t_n;S_2)=\Phi(t_{n-1};S_2),\;\;
t_0=z,\nonumber
\eea
namely,
\bea
\Psi_1(t_{n-1};S_2)=\int_0^{t_{n-1}}\frac{dt_{n}}{t_{n}}\Psi_0(t_{n};S_2),\quad
\Psi_2(t_{n-2};S_2)=\int_0^{t_{n-2}}\frac{dt_{n-1}}{t_{n-1}}\Psi_1(t_{n-1};S_2),
\quad \dots.\nonumber
\eea
Performing integration in (\ref{a17}), we obtain 
\bea
\Psi_n(z;S_2)=\sum_{k=1}^{\infty}\frac{z^k}{k^n}-\frac1{d_1^n}\sum_{k_1=1}^
{d_2-1}\frac{z^{k_1d_1}}{k_1^n}-\sum_{k_1,k_2\in{\mathbb K}_2}\frac{z^{k_1d_1
+k_2d_2}}{(k_1d_1+k_2d_2)^n}.\label{a18}
\eea
Thus, for $z=1$ we have 
\bea
g_{-n}(S_2)=\sum_{k=1}^{\infty}\frac1{k^n}-\sum_{k_1=0}^{d_2-1}\sum_{k_2=1}^{
\infty}\frac1{(k_1d_1+k_2d_2)^n}-\frac1{d_1^n}\sum_{k_1=1}^{d_2-1}\frac1{k_1^n},
\quad n\ge 2.\label{a19}
\eea
Define a ratio $\delta=d_1/d_2$ and represent the last expression as follows. 
\bea
g_{-n}(S_2)=\sum_{k=1}^{\infty}\frac1{k^n}-\frac1{d_2^n}\sum_{k_2=1}^{\infty}
\frac1{k_2^n}-\frac1{d_2^n}\sum_{k_1=1}^{d_2-1}\sum_{k_2=1}^{\infty}\frac1{
(k_1\delta+k_2)^n}-\frac1{d_1^n}\sum_{k_1=1}^{d_2-1}\frac1{k_1^n}.\nonumber
\eea
Making use of the Hurwitz $\zeta(n,q)=\sum_{k=0}^{\infty}(k+q)^{-n}$ and 
Riemann zeta functions $\zeta(n)=\zeta(n,1)$, we obtain 
\bea
g_{-n}(S_2)=\left(1-\frac1{d_2^n}\right)\zeta(n)-\frac1{d_2^n}\sum_{k_1=1}^
{d_2-1}\zeta(n,k_1\delta),\quad n\ge 2.\label{a20}
\eea
Interchange the generators $d_1$ and $d_2$ in (\ref{a20}) and get an alternative
expression for $g_{-n}(S_2)$:  
\bea
g_{-n}(S_2)=\left(1-\frac1{d_1^n}\right)\zeta(n)-\frac1{d_1^n}\sum_{k_2=1}^
{d_1-1}\zeta\left(n,\frac{k_2}{\delta}\right).\label{a21}
\eea
\section{Identities for Hurwitz zeta functions}\label{w4}
Combining formulas (\ref{a20}) and (\ref{a21}), we get the identity 
\bea
\frac1{d_2^n}\left[\zeta(n)+\sum_{k=1}^{d_2-1}\zeta\left(n,k\frac{d_1}{d_2}
\right)\right]=\frac1{d_1^n}\left[\zeta(n)+\sum_{k=1}^{d_1-1}\zeta\left(n,k
\frac{d_2}{d_1}\right)\right],\quad n\ge 2,\label{a22}
\eea
that yields an expression for another important function for numerical semigroup
$S_2$, namely, a sum of the negative degrees of non-gaps values (zero excluding)
 $G_{-n}(S_2)=\sum_{s\in S_2\setminus \{0\}}s^{-n},n\ge 2$. This conclusion 
follows from a simple identity $g_{-n}(S_2)+G_{-n}(S_2)=\zeta(n)$ and formulas 
(\ref{a20},\ref{a21}).

Another spinoff of formulas (\ref{a20}) and (\ref{a21}) is a set of identities 
for the Hurwitz zeta functions. Consider the numerical semigroup $S_2=\la 2,3
\ra$ with one gap ${\mathbb N}\setminus\la 2,3\ra=\left\{1\right\}$. 
Substituting $g_{-n}=1$ into formulas (\ref{a20}) and (\ref{a21}), we obtain 
\bea
\zeta\left(n,\frac{2}{3}\right)+\zeta\left(n,\frac{4}{3}\right)=(3^n-1)\zeta(n)
-3^n\quad\hbox{and}\quad\zeta\left(n,\frac{3}{2}\right)=(2^n-1)\zeta(n)-2^n,
\nonumber
\eea
respectively. Making use of semigroups with more complex set of gaps, we can 
get more sophisticated identities. For example, consider the numerical semigroup
$S_2=\la 3,4\ra$ with three gaps ${\mathbb N}\setminus\la 3,4\ra=\left\{1,2,5
\right\}$. Substituting it into (\ref{a20}) and (\ref{a21}), we have 
\bea
\zeta\left(n,\frac{3}{4}\right)+\zeta\left(n,\frac{6}{4}\right)+\zeta\left(n,
\frac{9}{4}\right)=(4^n-1)\zeta(n)-\left(4^n+2^n+\left(\frac{4}{5}\right)^n
\right)\nonumber
\eea 
and 
\bea 
\zeta\left(n,\frac{4}{3}\right)+\zeta\left(n,\frac{8}{3}\right)=(3^n-1)\zeta(n)-
\left(3^n+\left(\frac{3}{2}\right)^n+\left(\frac{3}{5}\right)^n\right),\nonumber
\eea
respectively.  
\section*{Acknowlegement}
The main part of the paper was written during the stay of one of the authors 
(LGF) at the School of Mathematics and Statistics of Wuhan University and its 
hospitality is highly appreciated. The research was supported in part (LGF) by 
the Kamea Fellowship.

\end{document}